\DeclareMathOperator{\diag}{diag}
\DeclareMathOperator{\dist}{dist}
\DeclareMathOperator{\spec}{sp}
\newcommand{\mytilde}{\raise.17ex\hbox{$\scriptstyle\mathtt{\sim}$}}
\def\dist{\mathop{\rm dist }\nolimits}
\def\diag{\mathop{\rm diag }\nolimits}
\def\z{\mbox{\boldmath $z$}}
\def\vec0{\mbox{\boldmath $0$}}
\def\A{\mbox{\boldmath $A$}}
\def\A{\mbox{\boldmath $A$}}
\def\D{\mbox{\boldmath $D$}}
\def\I{\mbox{\boldmath $I$}}
\def\J{\mbox{\boldmath $J$}}
\def\M{\mbox{\boldmath $M$}}
\def\V{\mbox{\boldmath $V$}}
\def\I{\mbox{\boldmath $I$}}
\def\J{\mbox{\boldmath $J$}}
\def\M{\mbox{\boldmath $M$}}
\theoremstyle{plain}   
\newtheorem{theorem}{Theorem}[section]
\newtheorem{proposition}[theorem]{Proposition}
\newtheorem{problem}[theorem]{Problem}
\begin{document}

\title{On bipartite mixed graphs
}

\author{C. Dalf\'o$^a$, M. A. Fiol$^b$, N. L\'opez$^c$
\\ \\
{\small $^{a,b}$Dep. de Matem\`atiques, Universitat Polit\`ecnica de Catalunya} \\
{\small $^b$Barcelona Graduate School of Mathematics} \\
{\small Barcelona, Catalonia} \\
{\small {\tt\{cristina.dalfo,miguel.angel.fiol\}@upc.edu}} \\
{\small $^c$Dep. de Matem\`atica, Universitat de Lleida}\\
 {\small Lleida, Spain}\\
 {\small {\tt nlopez@matematica.udl.es}}
}
\date{}

\maketitle
\begin{abstract}
Mixed graphs can be seen as digraphs that have both arcs and edges (or digons,
that is, two opposite arcs). In this paper, we consider the case where such
graphs are bipartite. As main results, we show that in this context the
Moore-like bound is attained in the case of diameter $k=3$, and that bipartite
mixed graphs of diameter $k\ge 4$ do not exist.
\end{abstract}

\noindent\emph{Keywords:} Mixed graph, degree/diameter problem, Moore bound, diameter.

\noindent\emph{MSC:} 05C35

\section{Introduction}
It is well known that the choice of the interconnection network for a multicomputer
or any complex system  is one of the crucial problems the designer has to face.
Indeed, the network topology affects largely the performance of the system and it
has an important contribution to its overall cost. As such topologies are modeled
by either graphs, digraphs, or mixed graphs, this has lead to the following
optimization problems:
\begin{enumerate}
\item
Find graphs, digraphs or mixed graphs, of given diameter and maximum out-degree that have a large number of vertices.
\item
Find graphs, digraphs or mixed graphs, of given number of vertices and maximum out-degree that have small diameter.
\end{enumerate}

For a more detailed description of the problem, its possible applications, the usual notation, and the theoretical background see the comprehensive survey of Miller and \v{S}ir\'a\v{n} \cite{ms13}. For more specific results concerning mixed graphs, which are the topic of this paper, see, for example, Nguyen and Miller \cite{nm08}, and Nguyen, Miller, and Gimbert \cite{nmg07}.

A mixed graph can be seen as a type of digraph containing some edges (two opposite
arcs). Thus, a {\em mixed graph} $G$ with vertex set $V$ may contain (undirected) {\em
edges} as well as directed edges (also known as {\em arcs}). From this point of
view, a {\em graph} (respectively, {\em directed graph} or {\em digraph}) has all its edges
undirected (respectively, directed). In fact, we can identify the mixed graph $G$ with its
associated digraph $G^*$ obtained by replacing all the edges by digons (two
opposite arcs or a directed $2$-cycle).
The {\em undirected degree} of a vertex $v$, denoted by $d(v)$, is the number of
edges incident to $v$. The {\em out-degree} (respectively, {\em in-degree}) of vertex $v$,
denoted by $d^+(v)$ (respectively, $d^-(v)$), is the number of arcs emanating from (respectively, to) $v$.  If $d^+(v)=d^-(v)=z$ and $d(v)=r$, for all $v \in V$, then $G$ is said to be {\em totally regular\/} of degree $(r,z)$, with $r+z=d$ (or simply {\em
$(r,z)$-regular}).
For mixed graphs, the degree/diameter (optimization) problem is:

\begin{problem}
Given three natural numbers $r,z$
and $k$, find the largest possible number of vertices $N(r,z,k)$ in a
mixed graph with maximum undirected degree $r$, maximum directed out-degree $z$
and diameter $k$.
\end{problem}
This  can be viewed as a generalization of the corresponding problem for undirected
and directed graphs. For these cases, the problem has been widely
studied, see again Miller and \v{S}ir\'a\v{n} \cite{ms13}.
In our general case,  Buset, El Amiri, Erskine, Miller, and P\'erez-Ros\'es \cite{baemp15} proved that the  maximum number of vertices for a mixed graph of diameter $k$ with maximum undirected degree $r$ and maximum out-degree $z$ is:

\begin{equation}
\label{eq:moorebound4}
M(r,z,k)=A\frac{u_1^{k+1}-1}{u_1-1}+B\frac{u_2^{k+1}-1}{u_2-1},
\end{equation}
where,  with $d=r+z$ and $v=(d-1)^2+4z$,
\begin{align}
u_1 &=\displaystyle{\frac{d-1-\sqrt{v}}{2}}, \qquad
u_2 =\displaystyle{\frac{d-1+\sqrt{v}}{2}}, \label{u's}\\
A   &=\displaystyle{\frac{\sqrt{v}-(d+1)}{2\sqrt{v}}}, \quad \
B   =\displaystyle{\frac{\sqrt{v}+(d+1)}{2\sqrt{v}}}. \label{A&B}
\end{align}

This bound applies when $G$ is totally regular with degrees $(r,z)$. Moreover, if we bound the total degree $d=r+z$, the largest number is always obtained when $r=0$ and $z=d$. That is, when the graph is a digraph with no digons (or `edges').
In general, Nguyen, Miller, and Gimbert \cite{nmg07} showed that mixed graphs with $r,z\ge 0$ (or {\em mixed Moore graphs}) only exist for diameter $k=2$.

In some of our constructions we use the Moore bipartite graphs ($r=d=\Delta$ and $z=0$), which are known to exists only for diameters $k=D\in\{2,3,4,6\}$, with number of vertices
\begin{align}
M_b(\Delta,D) &=1+\Delta+\Delta(\Delta-1)+\cdots +\Delta(\Delta-1)^{D-2}+(\Delta-1)^{D-1}
\nonumber\\
 &=2\frac{(\Delta-1)^D-1}{\Delta-2}. \label{Moore-bip}
\end{align}
(Notice that second expression assumes that $\Delta>2$, the case $\Delta=2$ corresponds to a cycle with $M_b(\Delta,D)=2D$ vertices.)


\section{Moore bound for bipartite mixed graphs}

An alternative approach
for computing the bound \eqref{eq:moorebound4} has been given recently by Dalf\'o, Fiol, and L\'opez \cite{dfl16}. In order to study the bipartite case, we now use this last approach.

\begin{proposition}
\label{propo-Moore}
The Moore-like upper bound for bipartite mixed graphs with maximum undirected degree $r$, maximum directed out-degree $z$, and diameter $k$, is
\begin{itemize}
\item[$(a)$]
For $r>0$,
\begin{equation}
\label{Moore-mix-bip}
M_B(r,z,k)=
2\left(A\,\frac{u_1^{k+1}-u_1}{u_1^2-1}+ B\,\frac{u_2^{k+1}-u_2}{u_2^2-1}\right),
\end{equation}
where $u_1$, $u_2$, $A$, and $B$ are given by \eqref{u's} and \eqref{A&B}.
\item[$(b)$]
For $r=0$ $($and $z=d>1$$)$,
\begin{equation}
\label{Moore-bip-dig}
M_b(d,k)=\left\{
\begin{array}{ll}
\displaystyle 2\frac{d^{k+1}-1}{d^2-1}, & \mbox{ for $k$ odd,}\\[.3cm]
\displaystyle 2\frac{d^{k+1}-d}{d^2-1}, & \mbox{ for $k$ even.}
\end{array}
\right.
\end{equation}
\end{itemize}
\end{proposition}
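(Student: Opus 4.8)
\noindent The plan is to couple the per-distance count behind the general mixed Moore bound with the elementary fact that, in a bipartite graph, one can estimate the size of each side of the bipartition by counting from \emph{either} side, keeping the smaller of the two estimates.

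\emph{Step 1 (a uniform per-distance bound).} I would first recall, from \cite{dfl16}, that in a mixed graph with maximum undirected degree $r$ and maximum out-degree $z$ (write $d=r+z$), the number of vertices at distance exactly $i$ from any fixed vertex is at most $c_i$, where $c_0=1$, $c_1=d$ and $c_{i+1}=(d-1)c_i+z\,c_{i-1}$ for $i\ge 1$. (Sketch: a shortest path to a vertex at distance $i$ is a non-backtracking walk of length $i$; such a walk whose last step is an edge prolongs by at most $r-1$ edges and $z$ arcs, one whose last step is an arc by at most $r$ edges and $z$ arcs, which gives the recurrence.) Solving the recurrence yields $c_i=A\,u_1^{\,i}+B\,u_2^{\,i}$ with $u_1,u_2,A,B$ as in \eqref{u's}--\eqref{A&B} when $r>0$, and $c_i=d^{\,i}$ when $r=0$ (then $u_1=-1$, $u_2=d$, $A=0$, $B=1$).

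\emph{Step 2 (counting from both sides).} Now let $G$ be bipartite with parts $V=V_1\cup V_2$ and diameter $k$, and pick $w_1\in V_1$, $w_2\in V_2$. Since every edge and arc joins $V_1$ with $V_2$, every walk alternates sides, so each vertex of $V_1$ lies at even distance from $w_1$ and at odd distance from $w_2$, and dually for $V_2$; all these distances are at most $k$. Putting $S_0=\sum_{0\le i\le k,\ i\ \text{even}}c_i$ and $S_1=\sum_{1\le i\le k,\ i\ \text{odd}}c_i$, Step~1 gives $|V_1|\le\min\{S_0,S_1\}$ and $|V_2|\le\min\{S_0,S_1\}$, hence
\[
N=|V_1|+|V_2|\le 2\min\{S_0,S_1\}.
\]
Since $d\ge 2$ makes $(c_i)$ non-decreasing and $c_0=1$, telescoping the difference $S_0-S_1$ shows the minimum is $S_1$ if $k$ is even and $S_0$ if $k$ is odd.

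\emph{Step 3 (identifying the closed form).} Finally I would evaluate $2S_1$ ($k$ even) and $2S_0$ ($k$ odd). For $r>0$ and $k$ even, summing the odd-indexed geometric subseries gives
\[
2S_1=2\!\sum_{1\le i\le k-1,\ i\ \text{odd}}\!\big(Au_1^{\,i}+Bu_2^{\,i}\big)
=2\Big(A\,\frac{u_1^{\,k+1}-u_1}{u_1^2-1}+B\,\frac{u_2^{\,k+1}-u_2}{u_2^2-1}\Big),
\]
which is exactly \eqref{Moore-mix-bip}. For $r>0$ and $k$ odd the even-indexed sum gives $2S_0=2\big(A\tfrac{u_1^{k+1}-1}{u_1^2-1}+B\tfrac{u_2^{k+1}-1}{u_2^2-1}\big)$, and this still equals \eqref{Moore-mix-bip} because the discrepancy $2\big(\tfrac{A}{u_1+1}+\tfrac{B}{u_2+1}\big)$ vanishes: its numerator is $Au_2+Bu_1+(A+B)=(A+B)(u_1+u_2)-(Au_1+Bu_2)+1=(d-1)-d+1=0$, using $A+B=1$, $Au_1+Bu_2=c_1=d$, $u_1+u_2=d-1$. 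For $r=0$, substituting $c_i=d^{\,i}$ and summing the geometric series in $2S_1$ (even $k$) and $2S_0$ (odd $k$) gives the two cases of \eqref{Moore-bip-dig}. The one genuinely delicate step is Step~1 — the sharp per-distance count in a structure mixing edges and arcs while forbidding backtracking along an edge; everything after that is the two-sided counting of Step~2 plus the (routine, if slightly lucky-looking) algebra of Step~3, with the degenerate cases $\Delta=2$ (where $u_1^2-1$ or $u_2^2-1$ vanishes) read off directly from $2\min\{S_0,S_1\}$.
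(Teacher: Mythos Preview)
Your proof is correct and follows essentially the same route as the paper: both derive the recurrence $c_i=(d-1)c_{i-1}+zc_{i-2}$ for the per-distance counts, use bipartiteness to bound each partite set by the parity-restricted partial sum up to distance $k-1$, and then sum the resulting geometric series. Your two-sided $\min\{S_0,S_1\}$ framing is a mild repackaging of the paper's observation that one partite set lies entirely within distance $k-1$ of any vertex, and you are more explicit than the paper in verifying algebraically that the odd-$k$ and even-$k$ sums collapse to the single closed form \eqref{Moore-mix-bip} when $r>0$ (the paper simply asserts this and offers, as an alternative, a ``hang from an edge'' computation).
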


\begin{proof}
Let $G$ be a $(r,z)$-regular bipartite mixed graph with $d=r+z$. Given a vertex $v$, let $N_i=R_i+Z_i$ be the maximum number of vertices at distance
$i(=0,1,\ldots)$ from $v$. Here, $R_i$ is the number of vertices that, in
the corresponding tree rooted at $v$,  have an edge with their parents, and  $Z_i$
is the number of vertices that has an arc from their parents. Then,
\begin{equation}
\label{Ni}
N_i = R_i+Z_i = R_{i-1}((r-1)+z)+Z_{i-1}(r+z).
\end{equation}
That is,
\begin{align}
R_i & = R_{i-1}(r-1)+Z_{i-1}r,  \label{Ri}\\
Z_i & = R_{i-1}z+Z_{i-1}z. \label{Zi}
\end{align}
In matrix form,
$$
\left(
\begin{array}{c}
  R_i \\
  Z_i
\end{array}
\right)=
\left(
\begin{array}{cc}
  r-1 & r\\
  z   & z
\end{array}
\right)
\left(
\begin{array}{c}
  R_{i-1} \\
  Z_{i-1}
\end{array}
\right)=\cdots=\M^i\left(
\begin{array}{c}
  R_{0} \\
  Z_{0}
\end{array}
\right)=
\M^i\left(
\begin{array}{c}
  0 \\
  1
\end{array}
\right),
$$
where $\M=\left(
\begin{array}{cc}
  r-1 & r\\
  z   & z
\end{array}
\right)$, and by convenience $R_0=0$ and $Z_0=1$. Therefore,
\begin{equation}
\label{Ni1}
N_i = R_i+Z_i =\left(
\begin{array}{cc}
  1 & 1
\end{array}
\right)\M^i\left(
\begin{array}{c}
  0 \\
  1
\end{array}
\right).
\end{equation}
Alternatively, note that $N_i$ satisfies an easy linear recurrence formula (see again
Buset, El Amiri, Erskine, Miller, and P\'{e}rez-Ros\'{e}s
\cite{baemp15}). Indeed, from \eqref{Ni} and \eqref{Zi}, we have that
$Z_i=z(N_{i-1}-Z_{i-1})+zZ_{i-1}=zN_{i-1}$. Hence,
\begin{align}
N_i & =(r+z)N_{i-1}-R_{i-1}=(r+z)N_{i-1}-(N_{i-1}-Z_{i-1})\nonumber \\
    & =(d-1)N_{i-1}+zN_{i-2},\qquad i=2,3,\ldots \label{recur}
\end{align}
with initial values $N_0=1$ and $N_1=d$.
Solving the recurrence we get
\begin{equation}
\label{Ni2}
N_i=A u_1^i+B u_2^i\qquad i=0,1,2,\ldots
\end{equation}
where $A$, $B$, $u_1$, and $u_2$ are given by  \eqref{u's} and \eqref{A&B}.
Now, note that, since $G$ is bipartite, it has diameter $k$ if and only if $k$ is
the smallest number such that, for any given vertex $u$, all the vertices of one of
the partite sets of $G$ are at distance at most $k-1$ from $u$. As a consequence, using \eqref{Ni1},
we get the following two cases:
\begin{itemize}
\item[$(i)$] If $k\ge 2$ is even, say $k=2\ell$, then
the Moore bound for a bipartite mixed graph is
\begin{align}
\label{Moore-even}
M_B(r,z,k) =2\sum_{i=1}^{\ell}N_{2i-1}=2\left(
\begin{array}{cc}
  1 & 1
\end{array}
\right)
\sum_{i=0}^{\ell}\M^{2i-1}
\left(
\begin{array}{c}
  0 \\
  1
\end{array}
\right).
\end{align}
\item[$(ii)$] If $k\ge 3$ is odd, say $k=2\ell+1$, then
the Moore bound for a bipartite mixed graph is
\begin{align}
\label{Moore-odd}
M_B(r,z,k) =2\sum_{i=0}^{\ell}N_{2i}=2\left(
\begin{array}{cc}
  1 & 1
\end{array}
\right)
\sum_{i=0}^{\ell}\M^{2i}
\left(
\begin{array}{c}
  0 \\
  1
\end{array}
\right).
\end{align}
\end{itemize}

\begin{figure}[t]
\begin{center}
\includegraphics[width=14cm]{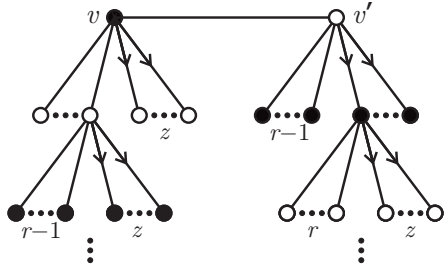}
\vskip-14.5cm
\caption{A Moore $(r,z)$-regular mixed graph hanged from an edge.}
\label{fig1}
\end{center}
\end{figure}

Now, we distinguish two more cases:
\begin{enumerate}
\item[$(a)$]
If $r\ge 1$, that is $G$ is a `proper' mixed graph, we can use \eqref{Ni2} to show that  both cases $(i)$ and $(ii)$, lead to the same expression in \eqref{Moore-mix-bip}.
In fact, in this case, we reach to the same conclusion if we `hang' the graph from an edge $\{v,v'\}$
and for $i=0,1,\ldots,k-1$, let $N_i=R_i+Z_i$ be the maximum number of vertices at distance $i$ from $v$ (respectively, $v'$), such that the respective shortest paths do not contain $v'$ (respectively, $v$), as shown in Figure~\ref{fig1}. Then, solving the recurrence \eqref{recur}, but now with initial values $N_0=1$ and $N_1=d-1$, we get
$N_i=A'u_1^i+B'u_2^i$, for $i=0,1,2,\ldots$, with $u_1$ and $u_2$ given by \eqref{u's},
$A'=\frac{\sqrt{v}-(d-1)}{2\sqrt{v}}$, and
$B'=\frac{\sqrt{v}+(d-1)}{2\sqrt{v}}$. Thus,
$M_B(r,z,k) =2\sum_{i=1}^{k-1}N_{i}$ yields again \eqref{Moore-mix-bip}.
\item[$(b)$]
Otherwise, if $r=0$, $G$ is a digraph (with no digons), $z=d$, and \eqref{u's}, \eqref{A&B}
give $A=0$, $B=1$, and $u_2=d$. Hence, \eqref{Ni2} turns out to be $N_i=d^i$, and the formulas in $(i)$ and $(ii)$ become the Moore bounds \eqref{Moore-bip-dig} for bipartite digraphs  given by Fiol and Yebra in \cite{fy90} for $d>1$.
(The case $d=1$ corresponds to a directed cycle with order $M_b(1,k)=k+1$.)
\end{enumerate}
This completes the proof.
\end{proof}

The remaining particular case when $G$ is an (undirected) graph, that is, $z=0$ and $r=d$, is already included in  the statement of the proposition. Indeed, in such a case, \eqref{u's} and \eqref{A&B} give $u_1=0$, $u_2=d-1$, $B=\frac{d}{d-1}$, and \eqref{Moore-mix-bip}
with $d=\Delta$ and $k=D$ becomes \eqref{Moore-bip}.

It is also worth noting that the eigenvalues of  the matrix $\M$  in \eqref{Ni} are precisely $u_1$ and $u_2$ given in \eqref{u's}.
Then, we have $\M^{i}=\V\D^{i}\V^{-1}$, where $\D=\diag(u_1,u_2)$, and
$$
\V=\left(\begin{array}{cc}
\frac{1-r+z+\sqrt{v}}{2z} & \frac{1-r+z-\sqrt{v}}{2z}\\ \\
1 & 1
\end{array}
\right).
$$

The bounds of Proposition \ref{propo-Moore} apply when $G$ is totally regular with degrees $(r,z)$. For instance, the Moore bounds for diameters $k=2,3,4$ and total degree $d=r+z$ turn out to be
\begin{align}
M_B(r,z,2) &=2d,\label{k=2}\\
M_B(r,z,3) &=2(d^2-r+1), \label{k=3}\\
M_B(r,z,4) &=2(d^3-d^2+(z-r+1)d+r). \label{k=4}
\end{align}
Moreover, if we bound the total degree $d=r+z$, the largest number always is
obtained when $r=0$ and $z=d$. That is, when the mixed graph is, in fact, a digraph with no edges. In the following table we show the values of \eqref{Moore-even} and
\eqref{Moore-odd} when $r=d-z$, with $0\le z\le d$, for different values of $d$ and diameter $k$. In particular, when $z=0$, the bound corresponds to the Moore
bound for bipartite graphs (numbers in boldface).

\medskip
{\footnotesize{
\medskip\noindent
\begin{tabular}{@{\,}ccccccc@{\,}}
$k\diagdown d$ \!\!\!\!\!\!  & $\vline$ & $1$ & $2$ & $3$ & $4$ & $5$ \\
\hline
2& \vline  &   {\bf 2}   &     {\bf 4}    &   {\bf  6} &       {\bf 8}   &  {\bf
10} \\
3& \vline & $2z+{\bf 2}$ & $2z+{\bf 6}$ &  $2z+{\bf 14}$ & $2z+{\bf 26}$ & $2z+{\bf
42}$ \\
4& \vline & $2z+{\bf 2}$ & $6z+{\bf 8}$ & $10z+{\bf 30}$ &$14z+{\bf 80}$ &
$18z+{\bf 170}$ \\
5& \vline & $2z^2+2z+{\bf 2}$ & $2z^2+12z+{\bf 10}$ & $2\z^2+34z+{\bf 62}$
&$2z^2+68z+{\bf 242}$ & $2z^2+114z+{\bf 682}$ \\
6& \vline & $2z^2+2z+{\bf 2}$ & $8z^2+20z+{\bf 12}$ & $14z^2+98z+{\bf 126}$
&$20z^2+284z+{\bf 728}$ & $26z^2+626z+{\bf 2730}$\\
\hline
\end{tabular}
\medskip}}

\section{Mixed bipartite Moore graphs}

Mixed bipartite graphs attaining bound \eqref{Moore-mix-bip} will be referred as {\em mixed bipartite Moore graphs}. These extremal graphs have been widely studied for the undirected case, where they may only exist for diameters $k \in \{2,3,4,6\}$ (Feit and Higman \cite{feit64}). For $d=2$, even cycles $C_{2k}$ are mixed Moore graphs. For any $d \geq 3$, complete bipartite graphs $K_{d,d}$ are the unique bipartite Moore graphs of diameter $k=2$ and degree $d$. Nevertheless, the problem is not closed for $k \in \{3,4,6\}$, where bipartite Moore graphs have been constructed only when $d-1$ is a prime power $p^l$ ($k=3,4$) and $3^{2l+1}$ ($k=6$). For the directed case, the problem of the existence of bipartite Moore digraphs was settled by Fiol and Yebra \cite{fy90}. These digraphs may only exist for $k \in \{2,3,4\}$. As for the undirected case, complete bipartite digraphs are the unique bipartite Moore digraphs of diameter $k=2$ meanwhile for $k \in \{3,4\}$ some families of bipartite Moore digraphs have been constructed, although the problem of their enumeration is not closed (see again \cite{fy90}, and Fiol, Gimbert, G\'omez and Wu \cite{fggw03}).

\begin{figure}[t]
    \begin{center}
        \includegraphics[width=12cm]{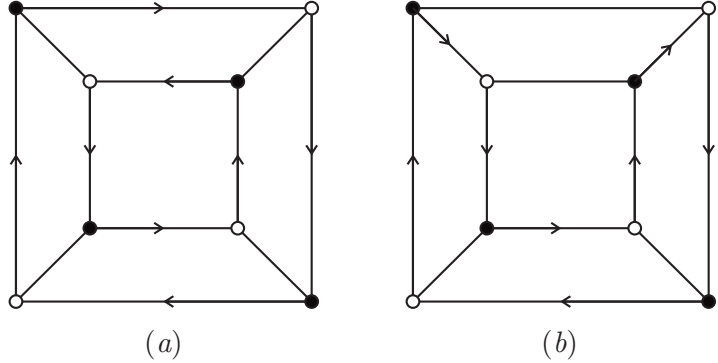}
    \vskip-10.75cm
	\caption{The only two mixed bipartite Moore graphs of parameters $r=z=1$ and $k=3$.}
	\label{fig2}
\end{center}
\end{figure}

Heading into the mixed case, from here on, we suppose that our mixed graphs contains at least one edge ($r \geq 1$), one arc ($z \geq 1$) and the diameter $k$ is at least three. It is easy to see that there are exactly two non-isomorphic mixed bipartite Moore graphs for $r=z=1$ and $k=3$ (see Figure~\ref{fig2}). Note that the mixed graph depicted in Figure~\ref{fig2}$(a)$ is the line digraph of the cycle $C_4$ $($seen as a digraph, so that each edge corresponds to a digon\/$)$. It is also the Cayley graph of the dihedral group $D_4=\langle a,b\, |\, a^4\!=\!b^2\!=\!(ab)^2\!=\!1 \rangle$, with generators $a$ and $b$. The spectrum of this mixed graph is the same as the cycle $C_4$  plus four more $0$'s, that is, $\spec G=\left\{2,\  0^6,\ -2\right\}$. (This is because $G$ is the line digraph of $C_4$, see Balbuena, Ferrero, Marcote, and Pelayo \cite{bfmp03}.) In fact, the mixed graph of Figure~\ref{fig2}$(b)$ is cospectral with $G$, and it can be obtained by applying a recent method to obtain cospectral digraphs (see Dalf\'o and Fiol \cite{df16}). Both mixed graphs are partial orientations of the hipercube graph of dimension $3$ that preserve the diameter of the underlying graph.
In contrast with that, we next prove that mixed bipartite Moore graphs do not exist for larger diameters.

\begin{figure}[t]
\begin{center}
\includegraphics[width=16cm]{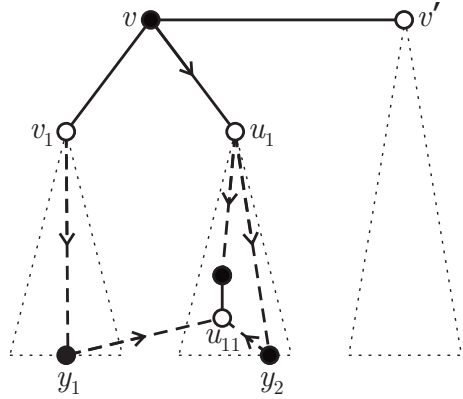}
\vskip-15cm
\caption{The structure of a mixed bipartite Moore graph of odd diameter $k \geq 5$. Here it is depicted for $r=2$ and $z=1$.}
\label{fig3}
\end{center}
\end{figure}

\begin{proposition}
Mixed bipartite Moore graphs do not exist for any $r\geq 1$, $z \geq 1$ and $k \geq 4$.
\end{proposition}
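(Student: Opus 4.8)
The plan is to assume that a mixed bipartite Moore graph $G$ with $r,z\ge 1$ and $k\ge 4$ exists, and to exploit the rigidity forced by equality in \eqref{Moore-mix-bip}. First I would record the structural consequences of extremality. Hanging $G$ from an edge $\{v,v'\}$ as in Figure~\ref{fig1}, the distance trees rooted at $v$ and at $v'$ partition $V(G)$, the two partite sets have equal size, and at every level $i\le k-1$ the branching is exactly that of \eqref{Ri}--\eqref{Zi}, so that each vertex at distance $i\le k-1$ from $v$ (resp.\ $v'$) along paths avoiding $v'$ (resp.\ $v$) is reached by a \emph{unique} shortest directed path. Combined with bipartiteness, this pins down the two bottom levels: writing $L,L'$ for the vertex sets at tree-level $k-1$ on the two sides, every $w\in L$ has exactly one in-neighbour (via an edge or an arc) at level $k-2$, namely its tree parent, while all of its remaining $r+z-1$ in-connections must come from $L'$ --- any other source would either duplicate the parent or create a second, $v'$-routed, shortest path to $w$, contradicting uniqueness; symmetrically for $L'$.

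Next I would convert the bottom-level incidences into counting identities. Summing in-connections on the two sides shows that the numbers of edges and of arcs between $L$ and $L'$ form a balanced system, and comparing with the out-degree budget of the vertices of $L$ forces exactly $Z_{k-1}=zN_{k-2}$ of the $zN_{k-1}$ arcs leaving $L$ to point ``upward'', i.e.\ to vertices at distance at most $k-2$ from $v$. I would then control these upward arcs using distances measured from $v'$, noting that every vertex of $L$ sits at the extreme distance $k$ from $v'$; uniqueness of shortest paths then confines the endpoints of the upward arcs to very specific levels, and feeding this back together with the closed forms \eqref{Ni2} for the $N_i$ produces a polynomial relation among $r,z,k$. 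For $k\le 3$ this relation holds --- and is realised by the two graphs of Figure~\ref{fig2} --- whereas for $k\ge 4$ the exponential growth governed by $u_2>1$ makes the two sides incompatible: concretely, the number of in-connections that the vertices near the tops of the trees are forced to absorb exceeds their degree $r+z$, or a count is forced to be negative. Even and odd $k$ would be treated in parallel, the shape of the last two levels differing slightly between the two cases (Figure~\ref{fig3} depicts the odd case).

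The main obstacle is the clash between the directed structure and the bipartite level hierarchy: an arc leaving a deep vertex need not climb exactly one level, so the clean, level-by-level intersection-number picture available for undirected bipartite Moore graphs is unavailable, and the ``upward'' arcs from $L$ are the genuinely free part of the configuration. The crux is to show that they are nonetheless so constrained --- via the extremal distances from $v'$ and shortest-path uniqueness --- that the degree-regularity of the vertices they target cannot survive once $k\ge 4$. An alternative, more algebraic route would translate extremality into a polynomial equation satisfied by the quotient matrix $\M$ acting on the distance partition (whose eigenvalues are $u_1,u_2$) and then derive the contradiction from the integrality of the eigenvalue multiplicities of the adjacency matrix of $G$; I would expect this route to run into the same essential difficulty in a different guise.
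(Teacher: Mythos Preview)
Your framework is sound: hanging $G$ from an edge $\{v,v'\}$, the uniqueness of shortest paths of length $<k$, the partition of $V(G)$ into the two distance trees, and the fact that every $w\in L=\Gamma_{k-1}(v)$ satisfies $\dist(v',w)=k$ are all correct and are exactly the ingredients the paper uses. Your count that exactly $Z_{k-1}=zN_{k-2}$ arcs leave $L$ ``upward'' is also correct.

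The gap is the passage from there to a contradiction. You assert that distances from $v'$ ``confine the endpoints of the upward arcs to very specific levels'' and that ``feeding this back \ldots\ produces a polynomial relation'' which fails for $k\ge 4$, but you never derive the relation, and the mechanism you name --- a degree overflow at vertices ``near the tops of the trees'' --- is not what actually happens. The constraint $\dist(v',w)=k$ controls paths \emph{into} $w$, not arcs \emph{out of} $w$, so it does not by itself restrict where an upward arc $w\to x$ may land; and the interior vertices collectively have more than enough spare in-degree to absorb $Z_{k-1}$ arcs, so a pure count over $L$ does not close.

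The paper's argument avoids all of this by working \emph{locally at level $k-2$} rather than globally at level $k-1$. Pick a single vertex $u_{11}\in\Gamma_{k-2}(v)$ lying in the subtree below an arc-neighbour $u_1$ of $v$ and joined to its tree parent by an \emph{edge} (such a vertex exists since $r\ge 1$, $z\ge 1$ and $k\ge 4$). Its $z$ arc-predecessors $y_1,\dots,y_z$ are forced --- by parity and by uniqueness of short paths exactly as in your $L$-analysis --- to lie in $\Gamma_{k-1}(v)$. Now look at which of the $d$ subtrees below $v$ each $y_i$ sits in: if some $y_i$ lies below an edge-neighbour $v_j$ of $v$, one gets two $v_j$--$u_{11}$ paths of length $<k$; if two $y_i$'s share a subtree below some $u_j$, one gets two $u_j$--$u_{11}$ paths of length $k-1$; and if the $y_i$'s are spread one per arc-subtree, then in particular one of them lies below $u_1$, giving two $u_1$--$u_{11}$ paths of lengths $k-3$ and $k-1$. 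Every case contradicts uniqueness. So the contradiction is about \emph{duplicated short paths to one fixed vertex}, not about global degree budgets, and no recurrence-solving or polynomial identity is needed.
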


\begin{proof}
First of all, we recall that there is a unique shortest path between any ordered pair of vertices of distance less than $k$ in a mixed bipartite Moore graph. Let $G$ be a mixed bipartite Moore graph and let us `hang' $G$ from any edge $\{v,v'\}$. We define the set $\Gamma_{j}(v)$, for $j<k$, as the set of vertices at distance $j$ from $v$ such that the unique mixed path from $v$ does not pass through $v'$, that is, $\Gamma_{j}(v)$ represents those vertices in the tree `hanged down' by $v$ at distance $j$ from $v$. Let $\{v_1,\dots,v_{r-1}\}$ be the set (possibly empty if $r=1$) of vertices adjacent from $v$ by an edge (others than $v'$). Let $\{u_1,\dots,u_z\}$ be the set of vertices adjacent from $v$ by an arc. Let $u_{11}$ be any vertex at distance $k-3$ from $u_1$ such that $u_{11}$ is connected by an edge through its antecessor in the unique $u_1$-$u_{11}$ mixed path joining them (see Figure~\ref{fig3}). Since $G$ is $(r,z)$-regular, there must exist $z$ vertices $\{y_1,\dots,y_z\}$ such that $y_i \rightarrow u_{11}$ is an arc for all $i=1,\dots,z$. Notice that either $y_i \in \Gamma_{k-1}(v)$ or $y_i \in \Gamma_{k-1}(v')$. Nevertheless, $u_{11}$ belongs to the same vertex partition as the vertices in $\Gamma_{k-1}(v')$ and as a consequence $y_i \in \Gamma_{k-1}(v)$ for all $i$. Notice that $\bigcup_{j=1}^{r-1} \Gamma_{k-2}(v_j)$ and $\bigcup_{j=1}^{z} \Gamma_{k-2}(u_j)$ are a partition of the vertex set $\Gamma_{k-1}(v)$. Now, if one $y_i$ belongs to $\bigcup_{j=1}^{r-1} \Gamma_{k-2}(v_j)$ (say $y_1 \in \Gamma_{k-2}(v_1)$), then we have two different paths from $v_1$ to $u_{11}$ of length $<k$: One path passing through $v$ and the other one passing through $y_1$, which is impossible. So, we can assume that $y_i \in \bigcup_{j=1}^{z} \Gamma_{k-2}(u_j)$ for all $i=1,\dots,z$. Now, since all the vertex sets $\Gamma_{k-2}(u_j)$ are disjoint, there is exactly one $y_i$ in each set $\Gamma_{k-2}(u_j)$ or there is a set $\Gamma_{k-2}(u_j)$ containing at least two $y_i$'s. In the first situation, we have that there is one $y_i$ (say $y_2$) into $\Gamma_{k-2}(u_1)$ but then we have two different paths from $u_1$ to $u_{11}$ of length $<k$ (one shortest path of length $k-3$ and the other of length $k-1$ through $y_2$). In the second situation, we have at least two $y's$ belonging to the same set (say $\Gamma_{k-2}(u_j)$) and then we have again two different paths from $u_j$ to $u_{11}$ of length $k-1$, which is a contradiction.
\end{proof}

Because of the previous result, it would be interesting to study
{\em mixed bipartite almost Moore graphs}, that is, mixed bipartite graphs with order $M(r,z,k)-2$.
Note that, in contrast with general mixed almost Moore graphs (see L\'opez and Miret \cite{lm16}), in the bipartite case
the mixed almost Moore graphs have two vertices less than the Moore bound. In Figure \ref{fig4}, we show two examples of
mixed bipartite almost Moore $(1,1)$-regular graphs with diameter $k=4$ and order $M(1,1,4)-2=12$.

\begin{figure}[t]
\begin{center}
\includegraphics[width=12cm]{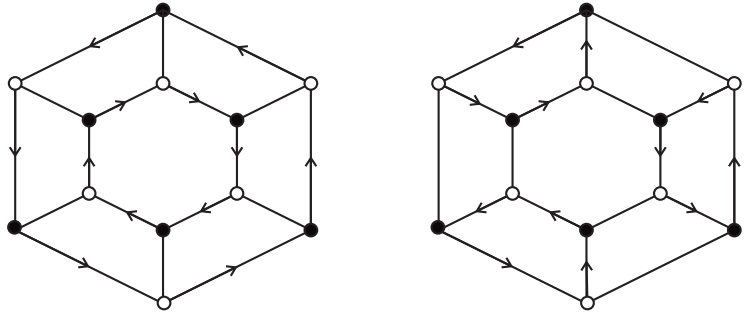}
\vskip-11.25cm
\caption{Two cospectral mixed bipartite almost Moore $(1,1)$-regular graphs.}
\label{fig4}
\end{center}
\end{figure}

\subsection{The case of diameter three}
Since mixed bipartite Moore graphs do not exist for diameter $k\ge 4$, we now investigate the case of diameter $k=3$. We first show that, in contrast with the case of mixed Moore graphs of diameter $k=2$, where the possible spectrum imposes conditions on $r$ and $z$ (see Bos\'ak \cite{b79}), a mixed bipartite Moore graph has always a simple spectrum.

\begin{proposition}
Let $G$ be a mixed bipartite Moore graph with degrees $r$, $z$, with $d=r+z$, and number of vertices $N=2(d^2-r+1)$. Then, $G$ has spectrum
\begin{equation}
\label{spec-k=3}
\spec G=\{d^1, (\sqrt{r-1})^{d^2-r}, (-\sqrt{r-1})^{d^2-r}, -d^1\},
\end{equation}
where the superscripts stand for the multiplicities.
\end{proposition}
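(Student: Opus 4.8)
The plan is to compute the spectrum of the adjacency matrix $\A$ of $G$ by exploiting the bipartite structure and the Moore condition to obtain a clean polynomial identity satisfied by $\A$. First I would fix the bipartition $V=V_1\cup V_2$ with $|V_1|=|V_2|=N/2=d^2-r+1$ and write $\A$ in block form $\A=\left(\begin{smallmatrix}\O & \B\\ \C & \O\end{smallmatrix}\right)$, where $\B$ records the arcs and edges from $V_2$ to $V_1$ and $\C=\B^{\!*}$-plus-arcs in the other direction; here I must be careful that for a mixed graph $\A$ is not symmetric, so $\C\neq\B^{\top}$ in general, but the digon/edge structure gives $\C=\B^{\top}$ on the edge part. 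The key structural fact, inherited from the Moore property for $k=3$, is that between any ordered pair of vertices in opposite parts there is a unique walk of length $1$ or $3$, and between any ordered pair in the same part there is a unique walk of length $2$ (counting the trivial walk of length $0$ for a vertex to itself). Translating "there is exactly one path of length $\le k$" into matrix language is exactly the same bookkeeping as in the derivation of the recurrence \eqref{recur}: the count of walks is governed by the matrix $\M=\left(\begin{smallmatrix}r-1 & r\\ z & z\end{smallmatrix}\right)$ with eigenvalues $u_1,u_2$, and $u_1u_2=-z$, $u_1+u_2=d-1$.

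Next I would assemble these walk-counting identities into a single matrix equation. Looking at walks of length $2$ from a vertex to vertices in its own part, one gets $\A^2 = (r-1)\I + \C'$ where $\C'$ is the "shortest-path-of-length-$2$" part; pushing this one step further with the diameter-$3$ condition should yield a polynomial $p(\A)=c\,\J$ for the all-ones matrix $\J$, or, since $G$ is bipartite, a relation of the form $\A^2(\A^2-(d^2-r+1)\text{-type term})=\dots$. Concretely I expect to land on something like $\A^4-(r-1)\A^2 = $ (a multiple of $\J$-block structure), equivalently $\A^3+\dots$ relating $\A$ and the biadjacency structure. The cleanest route: since $G$ is $d$-regular (total degree $d$) and bipartite, $\pm d$ are eigenvalues with the all-ones-type eigenvectors, and on the orthogonal complement the Moore-for-$k=3$ condition forces $\A^2$ to act as $(r-1)\I$ — because a vertex and itself are joined by exactly the $r-1$ back-and-forth edge-walks plus the $z$ back-and-forth arc-walks, wait, that gives $r-1+z$; the correct statement is that the number of walks of length $2$ from $u$ to $u$ is $d$ (each out-edge/arc followed by its reverse) but the number to a distinct same-part vertex is exactly $1$, so $\A^2 = (d-1)\I + \text{(same-part all-ones structure)}$ on that part, and after removing the $\pm d$ eigenspaces one gets $\A^2 = (r-1)\I$... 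I will need to recheck this constant, and that recheck is where the argument really lives.

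So the main steps are: (1) establish $d$-regularity and that $\pm d\in\spec G$ each with multiplicity one, with the bipartite symmetry of the spectrum about $0$; (2) derive from the uniqueness-of-short-paths the matrix identity that, restricted to the $(N-2)$-dimensional complement of $\langle \text{Perron},\ \text{signed Perron}\rangle$, reads $\A^2=(r-1)\I$; (3) conclude that the remaining eigenvalues are $\pm\sqrt{r-1}$, and split the multiplicity $N-2=2(d^2-r)$ evenly between them using the bipartite symmetry $\spec G=-\spec G$; (4) check the count $1+(d^2-r)+(d^2-r)+1 = N$. The main obstacle, and the step I would spend the most care on, is step (2): because $G$ is a genuine mixed graph, $\A$ is not normal, so "eigenvalue" means root of the characteristic polynomial and one cannot freely invoke an orthonormal eigenbasis; I would instead argue at the level of the minimal/characteristic polynomial, showing $(\A^2-(r-1)\I)$ annihilates a subspace of codimension $2$, i.e. that $(x^2-(r-1))(x^2-d^2)$ — or rather $(x-d)(x+d)(x^2-(r-1))$ — is (a multiple of) the characteristic polynomial, which pins the multiplicities as claimed. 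One should also double-check that $r-1>0$ so the square roots are real and the eigenvalues $\pm\sqrt{r-1}$ are genuinely distinct (the degenerate case $r=1$, where these collapse to a $0$-eigenvalue of multiplicity $2(d^2-1)$, is exactly the $\spec G=\{2,0^6,-2\}$ phenomenon already noted for the two graphs of Figure~\ref{fig2}, so the formula \eqref{spec-k=3} is uniform).
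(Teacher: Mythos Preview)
Your overall strategy is sound and leads to a correct proof, but it takes a different route from the paper. The paper writes a single Hoffman-like identity
\[
\I+(\A^{2}-r\I)+\tfrac{1}{d}\bigl(\A^{2}-(r-1)\I\bigr)\A=\J,
\]
encoding that there is a unique shortest path at distances $0$ and $2$ and exactly $d$ shortest paths at distance $3$; the eigenvalues of $\A$ are then read off as the roots of $H(x)=N$ and $H(x)=0$ for $H(x)=\tfrac{1}{d}(x^{3}-(r-1)x)+x^{2}+1-r$, with the multiplicities coming from bipartiteness. Your route instead uses only the distance-$\le 2$ information to obtain $\A^{2}=(r-1)\I+\J'$, where $\J'$ is the block-diagonal all-ones matrix on the two parts, and then finishes with the rank-$2$ kernel argument and the bipartite symmetry $\spec G=-\spec G$. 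This avoids the length-$3$ walk bookkeeping entirely, which is a genuine simplification; the price is the extra commutation check (that total regularity makes $\A$ commute with $\J'$, so $\ker\J'$ is $\A$-invariant), which the paper's global $\J$-identity sidesteps.

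The one step you must nail down is exactly the diagonal count you flag as needing recheck: a closed $2$-walk at $u$ must go out and return along an \emph{edge}, since an arc cannot be retraced (a proper mixed graph has no digons). Hence $(\A^{2})_{uu}=r$, not $d$ and not $r-1+z$. Combined with $(\A^{2})_{uv}=1$ for distinct same-part $u,v$ (unique $2$-path, since every such pair is at distance exactly $2$ in a diameter-$3$ bipartite Moore graph), this gives $\A^{2}=(r-1)\I+\J'$ directly, and your steps (2)--(4) then go through as outlined.
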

\begin{proof}
Let $\A$ be the adjacency matrix of $G$. Given any two vertices $u,v$, there exists a unique shortest path from $u$ to $v$ if $\dist(u,v)\le 2$, and exactly $d$ shortest paths if $\dist(u,v)=3$. Then, $\A$ satisfies the matrix equation
$$
\I+(\A^2-r\I)+\frac{1}{d}\left(\A^2-(r-1)\I\right)\A=\J,
$$
where $\J$ is the all-$1$ matrix. Then, since such a matrix has eigenvalues $N$, with multiplicity 1, and $0$ with multiplicity $N-1$, the eigenvalues of $\A$ are the solutions of the equations $H(x)=N$ and $H(x)=0$, where $H$ is the Hoffman-like polynomial
\begin{equation}
\label{H}
H=\frac{1}{d}(x^3-(r-1)x)+x^2+1-r.
\end{equation}
The first equation has solution $x=d$, as expected since $N=H(d)=2(d^2-r+1)$, whereas the second one holds for $x=\pm \sqrt{r-1}$ and $x=-d$. Finally, the multiplicities come from the fact that $G$ is bipartite and they must add up to $N$.
\end{proof}

Notice the particular case $r=1$, where according to \eqref{spec-k=3}, the mixed graphs have only three distinct eigenvalues: $d$ and $-d$ with multiplicity $1$, and $0$ with multiplicity $2(d^2-1)$. This is precisely the case of the following family:

\begin{proposition}
Moore bipartite mixed graphs with diameter $k=3$ and $r=1$ exist for any value of $z\ge 1$.
\end{proposition}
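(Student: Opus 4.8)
The plan is to realise the extremal graph as a \emph{line digraph}. Fix $d=z+1\ge 2$ and let $H=K_{d,d}^*$ be the complete bipartite digraph with stable sets $X$, $Y$, where $|X|=|Y|=d$; that is, $H$ is $K_{d,d}$ with every edge turned into a digon, so $H$ is strongly connected, $d$-regular (in- and out-degree), and has exactly $2d^2$ arcs. I would then set $G=L(H)$, whose vertices are the arcs of $H$, with an arc $(u,v)\to(v,w)$ in $G$ precisely when the head of the first arc is the tail of the second. Since $H$ is $d$-regular, $G$ is $d$-regular too and has $2d^2$ vertices.

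First I would verify that $G$ is a totally $(1,d-1)$-regular bipartite mixed graph. For bipartiteness, the arcs of $H$ that emanate from $X$ and those that emanate from $Y$ partition $V(G)$, and every arc of $G$ --- hence every digon of $G$ --- joins a vertex of the first class to one of the second. For the regularity, note that for an arc $(u,v)$ of $H$ the only digon of $G$ through it is $\{(u,v),(v,u)\}$, because $(u,v)\to(v,w)$ together with $(v,w)\to(u,v)$ forces $w=u$; hence every vertex of $G$ has undirected degree $r=1$, and among its remaining $d-1$ out-arcs (and $d-1$ in-arcs) there are no digons, giving directed degree $z=d-1$.

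Next I would show $\diam G=3$. The key observation is that walks of length $\ell$ in $G$ from $(u,v)$ to $(u',v')$ are in bijection with walks of length $\ell+1$ in $H$ from $u$ to $v'$ whose first arc is $(u,v)$ and whose last arc is $(u',v')$; since consecutive vertices of a walk in $K_{d,d}^*$ lie in opposite stable sets, the stable set containing $u'$ is then determined by that of $v$ and the parity of $\ell$. Chasing the three cases --- $u'=v$; $u'$ in the stable set not containing $v$; $u'\ne v$ in the same stable set as $v$ --- and using $\diam H=2$ (valid for $d\ge 2$), one checks that every ordered pair of vertices of $G$ is joined by a walk of length at most $3$, so $\diam G\le 3$, while any pair $\bigl((u,v),(v',w)\bigr)$ with $v'$ in the same stable set as $v$ and $v'\ne v$ --- which exists because $d\ge 2$ --- is at distance exactly $3$. (Equivalently, one may invoke the standard fact that $\diam L(H)=\diam H+1$ for a strongly connected digraph $H$ that is not a directed cycle.)

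Finally, since $r=1$, equation~\eqref{k=3} gives $M_B(1,z,3)=2(d^2-1+1)=2d^2=|V(G)|$, so $G$ attains the Moore-like bound and is a mixed bipartite Moore graph with parameters $(r,z)=(1,z)$ and $k=3$, for every $z\ge 1$. The step I expect to require the most care is the diameter computation: one must track the stable-set/parity bookkeeping when translating walks of $G$ into walks of $H$, and make sure that it is precisely the diameter-$2$ property of $K_{d,d}^*$ that forces every distance in $G$ to be at most $3$. For $d=2$ this construction reproduces the graph $L(C_4^*)$ of Figure~\ref{fig2}$(a)$.
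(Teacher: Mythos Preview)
Your proposal is correct and takes essentially the same approach as the paper: construct the extremal graph as the line digraph $L(K_{d,d}^*)$ of the complete bipartite digraph, then read off its order, bipartiteness, $(1,d-1)$-regularity, and diameter. The only difference is one of detail --- you verify bipartiteness, the digon count, and the diameter explicitly, whereas the paper simply invokes the standard fact $\diam L(H)=\diam H+1$ from \cite{fya84} and the observation that digons of $H$ become edges of $L(H)$.
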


\begin{proof}
For any given integer $d\ge 2$, let us consider the line digraph of the complete bipartite graph (seen as a symmetric digraph) $G=L(K_{d,d})$ (for example, Figure~\ref{fig2}$(a)$ shows the case $d=2$). Then, as $G$ has $d^2$ digons and diameter $2$, its line digraph $G$ is a bipartite mixed graph with diameter $3$ (according to Fiol, Yebra, and Alegre \cite{fya84}), $r=1$ (digons correspond to edges),
$z=d-1$, and number of vertices $2d^2$, so attaining the Moore bound \eqref{k=3}.
\end{proof}

Moreover, the minimum polynomial of $G$ is $m(x)=x^4-d^2x^2$.
Then, $G$ is a weakly distance-regular digraph (according to Comellas, Fiol, Gimbert and Mitjana \cite{cfgm04}) with distance polynomials
\begin{equation*}
p_0=1,\quad
p_1 =x,\quad
p_2 =x^2-1,\quad
p_3 =\frac{1}{d}x^3-x,
\end{equation*}
and Hoffman polynomial $H=\sum_{i=0}^3 p_i=\frac{1}{d}(x^3-x)+x^2+x$, as shown in  \eqref{H}.

For some other values of the diameter, we also have some families of mixed graphs that are asymptotically dense:
\begin{proposition}
There exist families of bipartite mixed graphs with diameter $k=4,5,7$, and $r=1$, that asymptotically attain the Moore bound for large values of $z$ being a power of a prime minus one.
\end{proposition}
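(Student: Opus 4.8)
The plan is to realize each of the three families as the line digraph of a known Moore bipartite graph. Fix $k\in\{4,5,7\}$ and set $D:=k-1\in\{3,4,6\}$. Recall from \eqref{Moore-bip} and the surrounding discussion that the Moore bipartite graph $M_b(d,D)$ is $d$-regular of diameter $D$ and exists whenever $d-1$ is a prime power (for $D=3,4$) or $d-1=3^{2\ell+1}$ (for $D=6$); in each of the three cases there are infinitely many admissible $d\ge 3$, and we shall take $z:=d-1$, so that $z$ ranges over the prime powers (for $k=4,5$), respectively over the values $3^{2\ell+1}$ (for $k=7$). For such a $d$, let $\Gamma:=M_b(d,D)$, regard it as the symmetric digraph $\Gamma^{*}$ obtained by replacing every edge by a digon, and set $G:=L(\Gamma^{*})$, the line digraph, whose vertices are the arcs $(u,v)$ of $\Gamma^{*}$, with an arc from $(u,v)$ to $(v,w)$ exactly when $vw\in E(\Gamma)$. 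I claim $G$ is a bipartite $(1,z)$-regular mixed graph of diameter $k$ with $|V(G)|/M_B(1,z,k)\to 1$ as $z\to\infty$.

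The structural facts to check, in order, are: (i) $G$ is $(1,z)$-regular: from each vertex $(u,v)$ there are exactly $\deg_\Gamma(v)=d$ out-arcs, of which the single arc to $(v,u)$ is matched by the reverse arc $(v,u)\to(u,v)$ and so forms the unique digon at $(u,v)$; hence $r=1$ and $z=d-1$, and dually for in-degrees. (ii) $G$ is bipartite: if $X,Y$ are the parts of $\Gamma$, then every arc of $L(\Gamma^{*})$ runs between the set of arcs of $\Gamma^{*}$ directed from $X$ to $Y$ and the set directed from $Y$ to $X$. (iii) $G$ has diameter $D+1=k$, by the line-digraph diameter theorem of Fiol, Yebra, and Alegre \cite{fya84} (invoked above already for the diameter-$3$ case), since $\Gamma^{*}$ is a strongly connected $d$-regular digraph with $d\ge 3$ that is not a directed cycle. (iv) $|V(G)|$ equals the number of arcs of $\Gamma^{*}$, that is $d\cdot M_b(d,D)=2d\,\dfrac{(d-1)^{D}-1}{d-2}$ by \eqref{Moore-bip}.

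For the asymptotics, put $z=d-1$, so that $v=(d-1)^{2}+4z=z^{2}+4z$ in \eqref{u's}--\eqref{A&B} and, as $z\to\infty$, $u_2=\tfrac12\bigl(z+\sqrt{z^{2}+4z}\bigr)=z+1+o(1)$, $u_1=-1+o(1)$, $A=O(z^{-2})$, and $B\to1$. Substituting into \eqref{Moore-mix-bip} with $k=D+1$: the $u_2$-term contributes $2B\,\dfrac{u_2^{k+1}-u_2}{u_2^{2}-1}\sim 2u_2^{\,D}\sim 2z^{D}$, while in the $u_1$-term the numerator $u_1^{k+1}-u_1$ stays bounded and $(u_1^{2}-1)^{-1}$ is of order $z$, so that term is $O(z^{-2})\cdot O(z)=O(z^{-1})=o(z^{D})$; hence $M_B(1,z,k)=2z^{D}(1+o(1))$. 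On the other hand $|V(G)|=2d\,\dfrac{(d-1)^{D}-1}{d-2}=2z^{D}(1+o(1))$ as well, so $|V(G)|/M_B(1,z,k)\to1$, which is the asserted asymptotic optimality. As a consistency check, the same construction with $D=2$ and $\Gamma=K_{d,d}$ gives $|V(G)|=2d^{2}=M_B(1,d-1,3)$ exactly, recovering the previous proposition; and no construction of this type is possible for $k=6$, since $M_b(d,5)$ does not exist.

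The step I expect to be the main obstacle is (iii): one must verify that the hypotheses of the line-digraph diameter theorem \cite{fya84} genuinely hold for $\Gamma^{*}=M_b(d,D)^{*}$ --- strong connectivity and $d$-regularity are immediate, and $\Gamma$ is not a cycle because its girth is $2D\ge 6$ while $d\ge 3$ --- and that the diameter increases by exactly $1$. The remaining work (the bookkeeping of out- and in-arcs in (i), the bipartition argument in (ii), and the leading-order estimates in the asymptotic comparison) is routine.
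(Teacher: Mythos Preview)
Your proposal is correct and follows exactly the same route as the paper: take the line digraph of the Moore bipartite graphs of diameter $D=k-1\in\{3,4,6\}$, observe that it is a bipartite $(1,d-1)$-regular mixed graph of diameter $k$ with $2d\,\frac{(d-1)^{D}-1}{d-2}$ vertices, and compare this with $M_B(1,d-1,k)$ asymptotically. Your write-up is in fact more careful than the paper's (which only spells out $k=4$ and leaves $k=5,7$ as ``similar''), since you verify the $(1,z)$-regularity, bipartiteness, and the diameter hypothesis for \cite{fya84}, and carry out the $u_1$/$u_2$ asymptotics explicitly.
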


\begin{proof}
Just consider, as in the previous proof, the line digraph of the corresponding Moore bipartite graphs that exists for diameters $D=3,4,6$. For example, a Moore bipartite graph with diameter $D=3$ that exists for degree $\Delta=p^l +1$ has order $2 \frac{(\Delta-1)^3-1}{\Delta-2}$ (see \eqref{Moore-bip}). Its corresponding line digraph is a mixed bipartite graph with parameters $k=4$, $r=1$ and $z=\Delta-1$ with order $2d \frac{(d-1)^3-1}{d-2}$. This mixed bipartite graph, for $d=r+z$ large enough, attains the corresponding Moore bound given by \eqref{k=4}, that is,  $M_B(1,d-1,4)=2(d^3-d^2+(d-1)d+1)$. The cases for diameters $k=5,7$ are similar.
\end{proof}

\noindent{\large \bf Acknowledgments.}  This research is supported by the
{\em Ministerio de Ciencia e Innovaci\'on}, and the {\em European Regional Development Fund} under project MTM2014-60127-P, the {\em Catalan Research Council} under project 2014SGR1147 (C. D. and M. A. F.). The author N. L. has been supported in part by grant MTM2013-46949-P, from {\em Ministerio de Econom\'{\i}a y Competitividad}, Spain.

\end{document}